\newenvironment{proof*}[1][\proofname]{\par
  \pushQED{\qed}%
  \normalfont \partopsep=\z@skip \topsep=\z@skip
  \trivlist
  \item[\hskip\labelsep
        \itshape
    #1\@addpunct{.}]\ignorespaces
}{%
  \popQED\endtrivlist\@endpefalse
}
\date{}
\begin{document} 

\centerline{\bf Applied Mathematical Sciences, Vol. x, 2015, no. xx, xxx - xxx}

\centerline{\bf HIKARI Ltd, \ www.m-hikari.com}

\centerline{} 

\centerline{} 

\centerline {\Large{\bf On Linear Recursive Sequences with Coefficients}} 

\centerline{\Large{\bf in Arithmetic-Geometric Progressions\footnote{to appear by April 2015}}} 


\centerline{} 

\centerline{\bf {Jerico B. Bacani and Julius Fergy T. Rabago}} 

\centerline{} 

\centerline{Department of Mathematics and Computer Science} 

\centerline{College of Science} 

\centerline{University of the Philippines Baguio} 

\centerline{Baguio City 2600, Philippines} 

\centerline{jicderivative@yahoo.com, jfrabago@gmail.com}

\newtheorem{theorem}{Theorem}[section]
\newtheorem{lemma}[theorem]{Lemma}
\newtheorem{corollary}[theorem]{Corollary}
\newtheorem{proposition}[theorem]{Proposition}
\newtheorem{definition}[theorem]{Definition} 
\newtheorem{example}[theorem]{Example}
\newtheorem{remark}[theorem]{Remark}

\centerline{}

{\footnotesize Copyright $\copyright$ 2014 Jerico B. Bacani and Julius Fergy T. Rabago. This is an open access article distributed under the Creative Commons Attribution License, which permits unrestricted use, distribution, and reproduction in any medium, provided the original work is properly cited.}

\begin{abstract} 
We present a certain generalization of a recent result of M. I. C\^{i}rnu on linear recurrence relations with coefficient in progressions \cite{cirnu}. We provide some interesting examples related to some well-known integer sequences, such as Fibonacci sequence, Pell sequence, Jacobsthal sequence, and the Balancing sequence of numbers. The paper also provides several approaches in solving the linear recurrence relation under consideration. We end the paper by giving out an open problem.\\ 
\end{abstract} 


{\bf Keywords:} linear recursive sequence, arithmetic progression, geometric progression, arithmetic-geometric progression

\section{Introduction}
A \emph{$k^{th}$-order linear recurrence relation} for a sequence $\{x_n \}_{n=0}^{+\infty}$ has the form
$$x_n = a_1 x_{n-1} +a_2 x_{n-2} + a_3 x_{n-3} + \ldots + a_k x_{n-k} + f_n \qquad (n \geq k),$$
where $a_i's$ are constants, and $f_k, f_{k+1}, f_{k+2}, \ldots$ is some given sequence. This linear recurrence relation is homogeneous if $f_n = 0$ for all $n$; otherwise, it is non-homogeneous. 

The sequence $\{x_n \}_{n=0}^{+\infty}$ that satisfies the relation above is called \emph{linear recurrence sequence}. It is an interesting topic in number theory because of its vast applications in science and mathematics. The simplest type of recurrence sequence is the \emph{arithmetic progression}, popularly known as \emph{arithmetic sequence}. It is a number sequence in which every term except the first, say $a$, is obtained by adding the preceding term a fixed number $d$, called the \emph{common difference}. The set $\mathbb{N}$ of natural numbers is a very good example for this. If $a_n$ denotes the $n^{th}$ term of an arithmetic sequence, then we have 
	\begin{equation}\label{arith}a_1=a, a_n=a_{n-1}+d \quad(n\geq2).\end{equation}
An explicit formula for $a_n$ is given by
	\[
		a_n=a+(n-1)d\quad(n\geq2).
	\]
The number sequence defined by the relation \eqref{arith} is an example of a linear recurrence sequence of order one. The sum $S_n$ of the terms in this progression is given by
	\[
		S_n = \frac{n}{2}[2a+ (n-1)d] \quad(n\geq1).
	\]
	
Another type of number sequences is the \emph{geometric progression}. It is a number sequence in which every term except the first is obtained by multiplying the previous term by a constant number $r$, called the \emph{common ratio.} The sequence $3,9,27,81,\ldots$ is an example of a geometric sequence with a common ratio 3. If $a_n$ denotes the $n^{th}$ term of the sequence with first term $a$ and common ratio $r,$ then $a_n$ is defined recursively as 
	\begin{equation}\label{geo}a_1=a, \quad a_n=a_{n-1}\cdot r \quad(n\geq2).\end{equation}
An explicit formula for $a_n$ is given by
	\[
		a_n=a\cdot r^{n-1} \quad(n\geq1).
	\]
The sum $S_n$ is given by
	\[
		S_n = a\frac{1 - r^n}{1-r},\;r\neq1 \quad(n\geq1).
	\]
If $|r|<1$, then we have 
	\begin{equation}\label{geolim}\lim_{n\rightarrow\infty}S_n=a\sum_{n=0}^{\infty}r^n= \frac{a}{1-r}.\end{equation}

Recently, various generalizations of arithmetic and geometric progressions were offered by several authors. In \cite{zhang}, X. Zhang and Y. Zhang introduced the concept of arithmetic progression with two common differences, and in \cite{ding}, X. Zhang and two others generalized this sequence by injecting a \emph{period} with alternate common differences. These concepts were then extended by A. A. K. Majumdar to geometric progressions \cite{majumdar}, in which an alternative approach to some results in \cite{ding} was also presented. Further extensions of these concepts are found in \cite{rabago1}, \cite{rabago2}, \cite{rabago3}, and \cite{rabago4}.   

In this paper we shall provide a generalization of a recent result of M. I. C\^{i}rnu on linear recurrence relations with coefficients in progressions \cite{cirnu}. We give some interesting examples related to some well-known sequences (e.g. Fibonacci sequence, Pell sequence, Jacobsthal sequence, and the Balancing number sequence). The results are elementary; however, the present study provides the readers new properties of some special types of recurrence sequences and several procedures in dealing with similar types of problems. Finally, we end the paper with an open problem.
\section{Main result}

In \cite{cirnu}, C\^{i}rnu solved the linear recurrence relation 
	\begin{equation}\label{relation}
		x_{n+1}=a_0x_{n+1}+a_1x_{n-1}+\cdots+a_{n-1}x_1+a_nx_0\quad (n\geq0),
	\end{equation}
where its coefficients $a_i's$ form an arithmetic (or geometric) progression. In short, he provided explicit formulas for $x_n$ to the following recurrence relations:

\begin{enumerate}
	\item[(i)] $x_{n+1}=ax_n+(a+d)x_{n-1}+\cdots+(a+(n-1)d)x_1+ (a+nd)x_0$; and
	\item[(ii)] $x_{n+1}=ax_n+aqx_{n-1}+\cdots+aq^{n-1}x_1+ aq^nx_0$,
\end{enumerate}
with initial data $x_0$. These two sequences defined by the two relations above were considered separately in \cite{cirnu}. In this paper, however, we dealt with the two sequences simultaneously. In other words, we considered the convolved sequence 
	\begin{equation}\label{ag}
		a,\;\; (a+d)r, \;\; (a+2d)r^2,\;\;\ldots,\; \;\;(a+(n-1)d)r^{n-1},\;\; (a+nd)r^n,
	\end{equation}
rather than dealing with (i) and (ii) separately. 

The sequence \eqref{ag} is called an \emph{arithmetic-geometric progression}. Though its form appears to be very simple, it has not gained much attention of mathematicians unlike the well-known Fibonacci sequence, Pell sequence, Jacobsthal sequence,and Balancing number sequence \cite{behera}. In fact, not much information is available about this sequence except for the formula for the sum $S_n$ of its first $n$ terms; that is,
	\[
		S_n=\sum_{k=0}^n\left( a+kd\right)r^k=\frac{a-\left( a+nd\right)r^{n+1}}{1-r}+dr\frac{1-r^n}{(1-r)^2}.
	\]
It is easy to verify that for $r \in (-1,1)$, 
	\[
		S_n \longrightarrow \frac{a}{1-r}+\frac{dr}{(1-r)^2}\quad \text{as}\quad n\longrightarrow \infty.
	\] 
But if $r\in \mathbb{R}\setminus(-1,1)$ and $n$ tends to infinity, $S_n$ diverges.
Perhaps, the sequence \eqref{ag} does not posses fascinating properties that the Fibonacci sequence does. However, as we shall see later, the sequence \eqref{ag} is somehow related to some well-known recurrence sequences of order two. 

By interchanging the operations of addition and multiplication by a constant, we may also define analogously a sequence that we call \emph{geometric-arithmetic progression}. This sequence is of the form 
	 \begin{equation}\label{ga}
		a,\;\; ar+d, \;\; ar^2+2d,\;\;\ldots,\; \;\;ar^{n-1}+(n-1)d,\;\; ar^n+nd.
	\end{equation}
It can be verified that the sum of the first $n$ terms of the sequence \eqref{ga} is given by
	\begin{equation}\label{sumga} S_n=a\frac{1-r^n}{1-r}+\frac{n(n-1)}{2}d.\end{equation} 
Now, combining the idea behind the usual arithmetic progression generated by \eqref{arith} and the concept of arithmetic-geometric progression (resp. geometric-arithmetic progression), we come up with the following recurrence relations of order one:
\begin{align}
	&a_0=a, \quad a_n = a_{n-1} r + d \quad \quad (n\geq1);\label{r1}\\
	&a_0=a, \quad a_n = (a_{n-1} + d)r \quad \,\,(n\geq1)\label{r2}.
\end{align}
For $r\neq 1$, the corresponding formulas of $n^{th}$ term for \eqref{r1} and \eqref{r2}, are as follows:
\begin{align}
	&a_n=ar^n+d\frac{1-r^n}{1-r};\label{a1}\\
	&a_n=ar^n+dr\frac{1-r^n}{1-r}.\label{a2}
\end{align}
For $r=1$, the recurrence relations \eqref{r1} and \eqref{r2} coincide with the usual arithmetic progression \eqref{arith}.

In this section, we consider the relation \eqref{relation}, where $a_n=(a+nd)r^n$; that is, we study the following recurrence relation:
	\begin{equation}\label{agrelation}
		x_{n+1}=ax_n + (a+d)r x_{n-1}+(a+2d)r^2 x_{n-2}+\cdots+(a+nd)r^nx_0\quad(n\geq 0).
	\end{equation}
Surprisingly, \eqref{agrelation} is related to a Horadam-like sequence since \eqref{agrelation} can be reduced to a second-order linear recurrence relation as we shall see in the proof of this result.

\begin{theorem}\label{cirnuthm}
	The sequence $\{x_n\}$ satisfies the recurrence relation \eqref{agrelation} with coefficients in arithmetic-geometric progression if and only if the sequence $\{x_n\}$ satisfies the generalized (second-order) Fibonacci sequence with the Binet fomula given by
		\begin{equation}\label{agsol}
			x_n=\frac{x_0}{\lambda_1-\lambda_2}\left[\left(B-a\lambda_2\right)\lambda_1^{n-1}-\left(B-a\lambda_1\right)\lambda_2^{n-1}\right]\quad (n\geq 1),
		\end{equation}
where  $B=a^2+(a+d)r$ and $\lambda_{1,2}=\frac{1}{2}\left(a+2r\pm\sqrt{a^2-4r(r-d-1)}\right)$. 
\end{theorem}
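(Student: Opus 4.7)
The plan is to reduce the convolution-type relation \eqref{agrelation} to a homogeneous linear recurrence of order two by applying a shift-and-subtract elimination twice, then read off the Binet formula from standard second-order theory.

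First, I would write \eqref{agrelation} at two consecutive indices and multiply the lower one by $r$ in order to kill the arithmetic part of the coefficients. Comparing $x_{n+1}$ with $rx_{n}$, the coefficient of $x_{n-k}$ for $k\geq 1$ collapses from $(a+kd)r^{k}-(a+(k-1)d)r^{k}=dr^{k}$, while the coefficient of $x_{n}$ remains $a$. This yields the auxiliary identity
\[
S_{n}\;:=\;x_{n+1}-(a+r)x_{n}\;=\;d\sum_{k=1}^{n}r^{k}x_{n-k}, \qquad n\geq 1,
\]
which still contains a convolution but now with purely geometric coefficients.

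Second, I would apply the same trick to $S_{n}$: computing $S_{n}-rS_{n-1}$ telescopes the geometric sum down to the single term $dr\,x_{n-1}$. Substituting back the definition of $S_{n}$ converts this into the second-order linear recurrence
\[
x_{n+1}\;=\;(a+2r)\,x_{n}\;-\;r\bigl(a+r-d\bigr)x_{n-1}, \qquad n\geq 2,
\]
whose characteristic polynomial is the quadratic satisfied by the $\lambda_{1},\lambda_{2}$ appearing in the statement. The required initial data come directly from \eqref{agrelation}: $x_{1}=ax_{0}$ and $x_{2}=[a^{2}+(a+d)r]\,x_{0}=Bx_{0}$.

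The Binet formula \eqref{agsol} then follows by writing $x_{n}=C_{1}\lambda_{1}^{n}+C_{2}\lambda_{2}^{n}$, solving the $2\times 2$ linear system imposed by $x_{1}=ax_{0}$ and $x_{2}=Bx_{0}$, and simplifying using the Vieta relations $\lambda_{1}+\lambda_{2}=a+2r$ and $\lambda_{1}\lambda_{2}=r(a+r-d)$; the building block $(B-a\lambda_{j})/(\lambda_{1}-\lambda_{2})$ emerges naturally in the algebra. For the converse direction of the ``if and only if,'' I observe that the second-order recurrence together with the two initial conditions $x_{1},x_{2}$ determines $\{x_{n}\}$ uniquely, so any sequence of the form \eqref{agsol} must coincide with the unique solution of \eqref{agrelation}, and one may alternatively verify \eqref{agrelation} by induction directly from the Binet formula. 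The main obstacle is the index bookkeeping in the two telescoping passes and making sure that the range $n\geq 2$ of the derived second-order recurrence is consistent with the initial values $x_{1}=ax_{0}$ and $x_{2}=Bx_{0}$ produced by \eqref{agrelation}.
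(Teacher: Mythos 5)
Your proposal is correct and follows essentially the same route as the paper: the double shift-and-subtract (forming $x_{n+1}-rx_n$ and then subtracting $r$ times the previous instance) to obtain $x_{n+1}=(a+2r)x_n-(r^2+(a-d)r)x_{n-1}$, followed by the characteristic-root/Binet computation with initial data $x_1=ax_0$, $x_2=Bx_0$, and an inductive verification for the converse direction. The only point worth flagging is that your (correctly computed) discriminant $(a+2r)^2-4r(a+r-d)=a^2+4dr$ does not literally agree with the expression $a^2-4r(r-d-1)$ printed in the theorem statement; this appears to be a typographical error in the paper (the two coincide when $r=1$, as in C\^{i}rnu's corollary) rather than a gap in your argument.
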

\begin{proof}
We prove the theorem by reducing \eqref{agrelation} to a second order linear recurrence relation of order two. We suppose that the sequence $\{x_n \}$ satisfy the recurrence relation \eqref{agrelation}. Then, we have $x_1=ax_0,\;x_2=a^2x_0+(a+d)rx_0,$ and   	
	\begin{eqnarray}
	x_{n+1} - rx_n&=&\sum_{k=0}^n\left( a+kd\right)r^kx_{n-k}-r\sum_{k=0}^{n-1}\left( a+kd\right)r^kx_{(n-1)-k}\\\nonumber
			   &=& ax_n+drx_{n-1}+dr^2x_{n-2}+\cdots+dr^nx_0;	\\
	x_n - rx_{n-1}&=&\sum_{k=0}^{n-1}\left( a+kd\right)r^kx_{(n-1)-k}-r\sum_{k=0}^{n-2}\left( a+kd\right)r^kx_{(n-2)-k}\\\nonumber
			  &=& ax_{n-1}+drx_{n-2}+dr^2x_{n-3}+\cdots+dr^{n-1}x_0.
	\end{eqnarray}
Hence, 
	\[
		(x_{n+1}-rx_n) -r(x_n-rx_{n-1})=ax_n+drx_{n-1}-arx_{n-1},
	\]
or equivalently,
	\begin{equation}\label{genfib}
		x_{n+1}=Px_n+Qx_{n-1},
	\end{equation}
where $P=a+2r$ and $Q=-(r^2+(a-d)r)$. We recognized that \eqref{genfib} portrays a Horadam-like sequence (cf. \cite{horadam}).  Hence, we reduced the recurrence relation \eqref{agrelation} of order $n$ to a second-order linear recurrence relation \eqref{genfib}. In fact, \eqref{genfib} can be further reduced to a linear recurrence relation of order one which will be shown later. 

Note that there are several methods known in solving linear recurrences. So we first provide several approaches in obtaining the general solution to \eqref{genfib} so as to help the readers get familiarized with solving similar problems.

\subsection*{Approach 1 (Using a discrete function $\lambda^n, \lambda\in\mathbb{R}, n\in \mathbb{N}$) }
Let $x_n=\lambda^n, \lambda\neq 0$. So $\lambda^{n+1}=P\lambda^n+Q\lambda^{n-1}$, which is equivalent to $\lambda^2-P\lambda-Q=0$. This characteristic equation of \eqref{genfib} has the solutions
	\[
		\lambda_{1,2}=\frac{P\pm\sqrt{P^2+4Q}}{2}.
	\]
Hence, \eqref{genfib} has the general solution $x_n=C_1\lambda_1^n+C_2\lambda_2^n$, where $C_1,C_2 \in \mathbb{R}$ subject to the initial conditions $x_1=C_1\lambda_1+C_2\lambda_2=ax_0$ and $x_2=C_1\lambda_1^2+C_2\lambda_2^2=a^2x_0+(a+d)rx_0$. Solving for $C_{1,2}$ we obtain $$C_{1,2}=\pm x_0\left(\frac{a^2+(a+d)r-a\lambda_{2,1}}{\lambda_{1,2}(\lambda_1-\lambda_2)}\right).$$
Thus the solutions of the recurrence relation \eqref{agrelation} are given by the formula \eqref{agsol}.
\begin{remark}\label{rem1}
	The recurrence relation \eqref{genfib} fails to hold for $n=0$. Hence, the initial conditions are $x_1$ and $x_2$ instead of $x_0$ and $x_1$.
\end{remark}
\subsection*{Approach 2 (Via reduction to order one)}
Let $\lambda_{1,2}$ be the roots of the quadratic equation $x^2-Px-Q=0$, where $P=a+2r$ and $Q=-(r^2+(a-d)r)$. Evidently, since $\lambda_1+\lambda_2=P$ and $\lambda_1\lambda_2=-Q$, we have $x_{n+1}=(\lambda_1+\lambda_2)x_n-\lambda_1\lambda_2x_{n-1}$, or equivalently,
	\begin{equation}\label{reduction}
		x_{n+1} -\lambda_1x_n=\lambda_2(x_n -\lambda_1x_{n-1}). 
	\end{equation}
Note that the sequence $\{x_{n+1} -\lambda_1x_n\}_{n=1}^{\infty}$ can be viewed as a geometric progression with $\lambda_2$ as the common ratio. Hence, by iterating $n$, we get
	\[
		x_{n+1} -\lambda_1x_n=\lambda_2^{n-1}(x_2 -\lambda_1x_1).
	\]
Dividing both sides by $\lambda_2^n$, and doing algebraic manipulations, we obtain
	\begin{equation}\label{yn}
		y_{n+1}=\frac{\lambda_1}{\lambda_2}y_n+\frac{x_2-\lambda_1x_1}{\lambda_2},
	\end{equation}
 where $y_n\colon\hskip-8pt=x_n/\lambda_2^n$. Suppose $P^2+4Q>0$. Then it follows that $\lambda_1>\lambda_2$. Letting $\rho=\lambda_1/\lambda_2$ and $\delta=(x_2-\lambda_1x_1)/\lambda_2$, we can express \eqref{yn} as
	$y_{n+1}=\rho y_n + \delta$, which is similar to the linear recurrence equation \eqref{r1} of order one.
Hence, in view of \eqref{a1} with $\rho\neq 1$ (together with Remark \eqref{rem1}) and by replacing $n+1$ by $n$, we get 
	\[
		\frac{x_n}{\lambda_2^n}=x_1\left(\frac{\lambda_1}{\lambda_2}\right)^n+\frac{x_2-\lambda_1x_1}{\lambda_2}\left[\frac{1-\left(\frac{\lambda_1}{\lambda_2}\right)^n}{1-\frac{\lambda_1}{\lambda_2}}\right].
	\]	
This equation yields
	\begin{equation}\label{eks}x_n=x_1\lambda_1^n+\frac{x_2-\lambda_1x_1}{\lambda_1-\lambda_2}\left(\lambda_1^n-\lambda_2^n\right).\end{equation}
We notice that for $n=1$, we get $x_1=x_2$. Replacing $n$ by $n-1$ on the LHS of \eqref{eks}, we obtain
	\[
		x_n=x_1\lambda_1^{n-1}+\frac{x_2-\lambda_1x_1}{\lambda_1-\lambda_2}\left(\lambda_1^{n-1}-\lambda_2^{n-1}\right).
	\] 
The above equation can be simplified into this desired formula:
	\[
		x_n=\frac{x_0}{\lambda_1-\lambda_2}\left[\left(B-a\lambda_2\right)\lambda_1^{n-1}-\left(B-a\lambda_1\right)\lambda_2^{n-1}\right].
	\]
\subsection*{Approach 3 (Using generating functions)}
Let $X(t)$ be the generating function for $\{x_n\}$. By considering Remark \eqref{rem1} and re-indexing,  we write $X(t)=\sum_{n=1}^{\infty}x_nt^{n-1}$. On the other hand, we multiply the equation \eqref{genfib} by $t^{n-1}$ and sum up the terms over $n\geq2$ to get
	\begin{equation}\label{ekst}\sum_{n=2}^{\infty}x_{n+1} t^{n-1}=P\sum_{n=2}^{\infty}x_n t^{n-1}+Q\sum_{n=2}^{\infty}x_{n-1} t^{n-1}.\end{equation}
After doing some algebraic manipulations, we get an equivalent form of \eqref{ekst}:
	\[
		\frac{1}{t}\left(X(t) -x_1-x_2t\right)=PX(t)-Px_1+QtX(t).
	\] 
Solving for $X(t)$, we obtain
	\[
		X(t) = \frac{(Px_1-x_2)t-x_1}{Qt^2+Pt-1}.
	\]
Thus, 
	\begin{align*}
		\sum_{n=1}^{\infty}x_nt^{n-1}&=\frac{(Px_1-x_2)t-x_1}{Qt^2+Pt-1}
				=\frac{(Px_1-x_2)t-x_1}{Q\left(t+\frac{P+\sqrt{P^2+4Q}}{2Q}\right)\left(t+\frac{P-\sqrt{P^2+4Q}}{2Q}\right)}\\
			&=\frac{(Px_1-x_2)t-x_1}{\lambda_2\left(t+\frac{\lambda_1}{Q}\right)\cdot \lambda_1\left(t+\frac{\lambda_2}{Q}\right)}
				=\frac{(Px_1-x_2)t-x_1}{\left(1-\lambda_2t\right)\left(1-\lambda_1t\right)}\\
			&=\frac{1}{\lambda_1-\lambda_2}\left[\frac{\lambda_1x_1-(Px_1-x_2)}{1-\lambda_1t}-\frac{\lambda_2x_1-(Px_1-x_2)}{1-\lambda_2t}\right]\\
			&=\frac{1}{\lambda_1-\lambda_2}\left(\frac{x_2-\lambda_1x_1}{1-\lambda_2t}-\frac{x_2-\lambda_2x_1}{1-\lambda_1t}\right).
	\end{align*}
Applying \eqref{geolim}, we get
\begin{align*}
		\sum_{n=1}^{\infty}x_nt^{n-1}
			&=\frac{1}{\lambda_1-\lambda_2}\left[ (x_2-\lambda_2x_1)\sum_{n=1}^{\infty}\lambda_1^{n-1}t^{n-1}-(x_2-\lambda_1x_1)\sum_{n=1}^{\infty}\lambda_2^{n-1}t^{n-1}\right]\\
			&=\sum_{n=1}^{\infty}\left[\frac{1}{\lambda_1-\lambda_2}\left( (x_2-\lambda_2x_1)\lambda_1^{n-1}-(x_2-\lambda_1x_1)\lambda_2^{n-1}\right)\right]t^{n-1}.
	\end{align*}
	By taking $x_1=ax_0$ and $x_2=(a^2+(a+d)r)x_0=Bx_0$ and dropping down the summation symbol, we obtain the desired result. 
\subsection*{Approach 4 (Using induction on $n$)}
We claim that \eqref{agsol} is the solution to \eqref{genfib}. First, we note that the formula \eqref{agsol} holds true for $n=1,2$. Next, we assume that \eqref{agsol} is a solution to \eqref{agrelation} for some integers $k$ and $k-1$ where $k\leq n$ for some fixed $n\geq 2$. Hence, from \eqref{genfib}, we have 
	\begin{align*}
	x_{k+1}	&=Px_k+Qx_{k-1}\\
			&=\frac{x_0}{\lambda_1-\lambda_2}\left[\left(B-a\lambda_2\right)\lambda_1^{k-2}(P\lambda_1+Q)
					-\left(B-a\lambda_1\right)\lambda_2^{k-2}(P+Q\lambda_2)\right].
	\end{align*} 
Since $\lambda_{1,2}$ are the roots of the characteristic equation $\lambda^2-P\lambda-Q=0$ of \eqref{genfib}, then $\lambda_{1,2}^2=P\lambda_{1,2}+Q$. Thus, we get
	\begin{align*}
		x_{k+1}  
			     &=\frac{x_0}{\lambda_1-\lambda_2}\left[\left(B-a\lambda_2\right)\lambda_1^{k}-\left(B-a\lambda_1\right)\lambda_2^{k}\right].
	\end{align*}
By principle of mathematical induction, we see that \eqref{agsol} is the solution to \eqref{genfib}.

Now, to complete our proof, we show that \eqref{agsol} also satisfies the relation \eqref{agrelation}. We again proceed by induction. For $n=1,2,3$ we have the following:
	\begin{align*}
	x_1	&=\frac{x_0}{\lambda_1-\lambda_2}\left[(B-a\lambda_2)-(B-a\lambda_1)\right]=ax_0,\\
	x_2	&=\frac{x_0}{\lambda_1-\lambda_2}\left[(B-a\lambda_2)\lambda_1-(B-a\lambda_1)\lambda_2\right]\\
	     	&=\frac{x_0}{\lambda_1-\lambda_2}\left[B\lambda_1-aQ-(B\lambda_2-aQ)\right]\\
	      	&=x_0B=a^2x_0+(a+d)rx_0\\
		&=ax_1+(a+d)rx_0,\\
	x_3	&=\frac{x_0}{\lambda_1-\lambda_2}\left[(B-a\lambda_2)\lambda_1^2-(B-a\lambda_1)\lambda_2^2\right]\\
	     	&=\frac{x_0}{\lambda_1-\lambda_2}\left[B(\lambda_1^2-\lambda_2^2)-a\lambda_1\lambda_2(\lambda_1-\lambda_2)\right]\\
		&=x_0\left(BP+aQ\right)=x_0[B(a+2r)-a(r^2+ar-dr)]\\
	      	&=x_0(aB+2a^2r+2ar^2+2dr^2-ar^2-a^2r+adr)\\
	      	&=x_0(aB+a(a+d)r+(a+2d)r^2)\\
		&=ax_2+(a+d)rx_1+(a+2d)r^2x_0.
	\end{align*}
Now, we suppose that \eqref{agrelation} is true for all $k\leq n$ where $n\geq 2$ is fixed, i.e.,	
	\[
		x_{k+1}=\sum_{l=0}^k(a+(k-l)d)r^{k-l} x_l\quad(k\leq n).
	\]
From \eqref{genfib}, together with the above equation, we obtain
	\begin{align*}
		x_{k+2}	&=Px_{k+1}+Qx_k\\
				&=P\sum_{l=0}^k(a+(k-l)d)r^{k-l} x_l+Q\sum_{l=0}^{k-1}(a+(k-1-l)d)r^{k-1-l} x_l\\
				&=P\sum_{l=0}^k(a+(k-l)d)r^{k-l} x_l+Q\sum_{l=1}^k(a+(k-l)d)r^{k-l} x_{l-1}.
	\end{align*}  
Now, with $P=a+2r$ and $Q=-r^2-ar+dr$, we get 
	\begin{align*}
		x_{k+2}	&=\sum_{l=2}^k(a+(k-l)d)r^{k-l} (Px_l+Qx_{l-1})\\
				&\hspace{.75in}+(a+2r)(a+kd)r^k x_0+(a+2r)(a+(k-1)d)r^{k-1} x_1\\
				&\hspace{1.5in}-(r^2+ar-dr)(a+(k-1)d)r^{k-1} x_0.
	\end{align*}  
Letting $A_k=(a+kd)r^k$, we obtain 
		\begin{align*}
		x_{k+2}	&=\sum_{l=2}^kA_{k-l} x_{l+1}+ a x_0 A_k + [2(a+kd)-(a+(k-1)d)]r^{k+1}x_0 \\
				&\quad\quad+ a^2x_0 A_{k-1} + 2arA_{k-1}x_0 -arA_{k-1}x_0+drA_{k-1}x_0\\
				&=\sum_{l=2}^kA_{k-l}x_{l+1}+A_{k+1}x_0 + A_k x_1 +  A_{k-1} x_2
				=\sum_{l=0}^{k+1}A_{k+1-l}x_l.
		\end{align*}  
This proves the theorem.
\end{proof}
\begin{remark}
It was mentioned in \cite{cirnu} that the usual Fibonacci sequence $\{F_n\}$ cannot be a solution of \eqref{agrelation} with $r=1$. However, if $r=1/2$ , then the Fibonacci numbers $\{F_n\}_{n=0}^{\infty}=\{0, 1, 1, 2, 3, 5, 8,\ldots\}$ become solutions of \eqref{agrelation} with $a=0$ and $d=5/2$ and inital data $x_0=4/5$. In particular, the linear recurrence equation
	\[
		x_{n+1}= \frac{5}{4}x_{n-1}+\frac{5}{4}x_{n-2}+\frac{15}{16}x_{n-3}+\cdots+\frac{5n}{2}x_0, \quad (n\geq 1)
	\]
	with initial condition $x_0=4/5$, has the solution
	\[
		x_n= \frac{\phi^{n-1}-(1-\phi)^{n-1}}{\sqrt{5}}=\!\colon F_{n-1},
	\]
	where $\phi$ denotes the well-known golden ratio, i.e. $\phi=(1+\sqrt{5})/2$.
	
	 Another interesting solution to \eqref{agrelation}, with $r=1/2$ and $a=0$ and $d=9/2$, is the Jacobsthal numbers $\{J_n\}=\{0, 1, 1, 3, 5, 11, 21, \ldots\}$. More precisely, the linear recurrence equation
	\[
		x_{n+1}= \frac{9}{4}x_{n-1}+\frac{9}{4}x_{n-2}+\frac{27}{16}x_{n-3}+\cdots+\frac{9n}{2}x_0, \quad (n\geq 1)
	\]
	with initial condition $x_0=4/9$, has the solution
	\[
		x_n= \frac{2^{n-1}-(-1)^{n-1}}{3}=\!\colon J_{n-1}.
	\]
\end{remark}

\begin{example}
In the following two examples we shall assume $r=1$.

\begin{enumerate}
\item[(1)] The sequence $\{P_n\}_{n=0}^{\infty}$ of Pell numbers $\{0, 1, 2, 5, 12, \ldots\}$ is a solution to \eqref{agrelation}. Indeed, this requires that $x_{n+1} = 2x_{n}-x_{n-1}$ with initial conditions $x_1=a x_0=0$ and $x_2=(a^2+a+d)x_0=1$. Therefore $a+2 = 2$ and $d-a-1 = 1$, giving us $a=0$ and $d=2$, and $x_0=1/2$. Thus, the recurrence
	\[
		x_{n+1}= 2 x_{n-1}+ 4 x_{n-2}+\cdots+ 2nx_0\quad(n\geq 1),
	\]
	with initial condition $x_0=1/2$, has the solution
	\[
		x_n= \frac{\sigma^{n-1}-(1-\sigma)^{n-1}}{\sqrt{2}}=\!\colon P_{n-1},
	\]
	where $\sigma$ denotes the well-known silver ratio, i.e. $\sigma=(1+\sqrt{2})/2$.
\item[(2)] In \cite{behera}, A. Behera and G. K. Panda introduced the concept of \emph{balancing numbers} $n \in \mathbb{N}$ as solutions of the equation
	\[
		1+2+ \cdots + (n -1) = (n+1) + (n+2) + \cdots + (n+r),
	\]
calling $r\in \mathbb{N}$, the \emph{balancer} corresponding to the balancing number $n$. For example $6, 35,$ and $204$ are balancing numbers with balancers $2, 14,$ and $84$, respectively. The sequence $B_n$ of balancing numbers satisfies the relation
	\[
		B_{n+1}=6B_n-B_{n-1}, \quad(n\geq 2),
	\]
	with intial conditions $B_1=1$ and $B_2=6$. It can be easily seen that the balancing numbers are solutions of \eqref{agrelation} with $a=d=4$ and initial condition $x_0=1/4$. More precisely, the linear recurrence relation
	\[
		x_{n+1}= 4x_n+ 8 x_{n-1}+12x_{n-2}+\cdots+ 4(n+1)x_0,	\quad(n\geq 1),
	\]
	with initial condition $x_0=1/4$, has the solution
	\[
		x_n= \frac{\lambda_1^{n-1}-\lambda_2^{n-1}}{\lambda_1-\lambda_2}=\!\colon B_{n-1},
	\]
	where $\lambda_{1,2}$ are roots of the quadratic equation $x^2-6x-1=0$, i.e. $\lambda_{1,2}=3\pm 2\sqrt{2}$.
	
\end{enumerate}
\end{example}

As corollaries to Theorem \eqref{cirnuthm}, we have the following results of C\^{i}rnu \cite{cirnu}. 
\begin{corollary}[\cite{cirnu}, Theorem 2.1]
The numbers $x_n$ are solutions of the linear recurrence relation with the coefficients in arithmetic progression 
	\[
		x_{n+1}=ax_n+(a+d)x_{n-1}+\cdots+(a+nd)x_0 \quad (n\geq 0),
	\]
	with initial data $x_0$ if and only if they are the generalized Fibonacci numbers given by the Binet type formula
	\[
			x_n=\frac{x_0}{\lambda_1-\lambda_2}\left[\left(B-a\lambda_2\right)\lambda_1^{n-1}-\left(B-a\lambda_1\right)\lambda_2^{n-1}\right]\quad (n\geq 1),
	\]
where  $B=a^2+a+d$ and $\lambda_{1,2}=\frac{1}{2}\left(a+2\pm\sqrt{a^2+4d}\right)$.
\end{corollary}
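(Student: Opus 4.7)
The plan is to obtain this corollary as an immediate specialization of Theorem \ref{cirnuthm} to the case $r=1$. Since the arithmetic-geometric progression \eqref{ag} collapses to an arithmetic progression precisely when the common ratio $r$ equals $1$, the recurrence relation stated in the corollary is nothing but \eqref{agrelation} with $r=1$. Thus the ``if and only if'' direction is already supplied by Theorem \ref{cirnuthm}; what remains is only to check that the quantities $B$ and $\lambda_{1,2}$ appearing in \eqref{agsol} reduce to those stated in the corollary.

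First I would substitute $r=1$ into the defining recurrence \eqref{agrelation} and observe that the coefficients $(a+kd)r^k$ become $a+kd$, so the sequence $\{x_n\}$ under consideration is exactly Cîrnu's arithmetic-coefficient sequence. Then I would invoke Theorem \ref{cirnuthm} to conclude that $\{x_n\}$ is a solution if and only if it is a generalized Fibonacci sequence satisfying \eqref{agsol}.

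Next I would compute the specializations: the quantity $B=a^2+(a+d)r$ from Theorem \ref{cirnuthm} becomes $B=a^2+a+d$, and the roots
\[
\lambda_{1,2}=\tfrac{1}{2}\!\left(a+2r\pm\sqrt{a^{2}-4r(r-d-1)}\right)
\]
become, upon setting $r=1$,
\[
\lambda_{1,2}=\tfrac{1}{2}\!\left(a+2\pm\sqrt{a^{2}+4d}\right),
\]
which matches the expressions in the corollary. Substituting these into the Binet formula \eqref{agsol} yields the stated formula verbatim, and the corollary follows.

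No real obstacle is anticipated here since the result is a direct specialization; the only point to double-check is the algebraic simplification of the discriminant $a^{2}-4r(r-d-1)$ at $r=1$, namely $a^{2}-4(1-d-1)=a^{2}+4d$, which is immediate.
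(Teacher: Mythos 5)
Your proposal is correct and matches the paper's treatment exactly: the paper presents this result as an immediate corollary of Theorem \ref{cirnuthm} obtained by setting $r=1$, and your verification that $B=a^2+(a+d)r$ and the discriminant $a^2-4r(r-d-1)$ specialize to $a^2+a+d$ and $a^2+4d$ respectively is the only checking required.
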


\begin{corollary}[\cite{cirnu}, Theorem 3.1]
The numbers $x_n$ are solutions of the linear recurrence relation with the coefficients in geometric progression
	\[
		x_{n+1}=ax_n+arx_{n-1}+\cdots+ar^nx_0, \quad (n\geq 0)
	\]
	with initial data $x_0$ if and only if they form the geometric progression given by
	\[
		x_n=ax_0(a+q)^{n-1}, \quad(n\geq 1).
	\]
\end{corollary}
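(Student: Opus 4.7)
The plan is to derive this corollary as the special case $d=0$ of Theorem \ref{cirnuthm}. When $d=0$, the arithmetic-geometric progression $(a+kd)r^k$ collapses to the pure geometric progression $ar^k$, so the relation in the corollary is exactly \eqref{agrelation} with $d=0$. It then suffices to substitute $d=0$ into the Binet formula \eqref{agsol} and simplify.

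The concrete calculation I would carry out is as follows. With $d=0$, the coefficients in \eqref{genfib} become $P=a+2r$ and $Q=-(r^2+ar)=-r(a+r)$, so the characteristic polynomial $\lambda^2-P\lambda-Q$ factors nicely as
\[
\lambda^2-(a+2r)\lambda+r(a+r)=(\lambda-r)\bigl(\lambda-(a+r)\bigr),
\]
which gives $\lambda_1=a+r$, $\lambda_2=r$, and $\lambda_1-\lambda_2=a$. Meanwhile $B=a^2+(a+d)r=a^2+ar=a(a+r)$, so
\[
B-a\lambda_2 = a(a+r)-ar = a^2,\qquad B-a\lambda_1 = a(a+r)-a(a+r)=0.
\]
Substituting these into \eqref{agsol} yields
\[
x_n=\frac{x_0}{a}\Bigl[a^2(a+r)^{n-1}-0\cdot r^{n-1}\Bigr]=ax_0(a+r)^{n-1},
\]
which is the claimed closed form (the $q$ in the statement being a typographical stand-in for $r$).

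For the converse direction, I would observe that Theorem \ref{cirnuthm} is an ``if and only if'' statement, so once one checks that the $d=0$ specialization of \eqref{agrelation} is literally the recurrence in the corollary, the biconditional is inherited: any sequence satisfying the geometric-coefficient recurrence is forced into the form \eqref{agsol}, which we just computed equals $ax_0(a+r)^{n-1}$, and conversely this explicit sequence plugs back into \eqref{agsol} and hence into the recurrence.

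There is essentially no obstacle here beyond careful bookkeeping; the only thing to watch is that both $\lambda_1\neq\lambda_2$ (i.e.\ $a\neq 0$) and the $\lambda_1-\lambda_2$ in the denominator are handled, since the vanishing of $B-a\lambda_1$ makes the $\lambda_2^{n-1}=r^{n-1}$ term disappear regardless. If one wishes a self-contained argument not invoking Theorem \ref{cirnuthm}, the same factorization of the characteristic polynomial can be obtained directly by applying the telescoping trick $x_{n+1}-rx_n$ to the geometric-coefficient recurrence, which instantly produces $x_{n+1}=(a+r)x_n$, from which $x_n=ax_0(a+r)^{n-1}$ follows by a one-step induction starting from $x_1=ax_0$.
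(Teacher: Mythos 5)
Your proposal is correct and follows exactly the route the paper intends: the corollary is obtained by specializing Theorem \ref{cirnuthm} to $d=0$, and your computation of $\lambda_1=a+r$, $\lambda_2=r$, $B=a(a+r)$, and the resulting collapse of \eqref{agsol} to $ax_0(a+r)^{n-1}$ is accurate (the paper itself gives no further detail, and the $q$ in its statement is indeed just $r$ carried over from C\^{i}rnu's notation). Your closing telescoping remark $x_{n+1}-rx_n=ax_n$ is a nice independent check but does not change the fact that this is essentially the paper's own argument.
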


\begin{remark}
It was presented in \cite[Corollary 2.2]{cirnu} that the recurrence relation 
	\begin{equation}\label{ex}
		x_{n+1}=x_n+2x_{n-1}+\cdots+nx_1+(n+1)x_0, \quad (n\geq 0),
	\end{equation}
	with the initial data $x_0 = 1$, has the solution
	\begin{equation}\label{rmk}
		x_n=\frac{1}{\sqrt{5}}\left[ \left( \frac{3+\sqrt{5}}{2}\right)^n - \left(\frac{3-\sqrt{5}}{2} \right)^n\right], \quad (n\geq 1).
	\end{equation}
	We point out that $\{x_n \}$ is in fact the sequence of Fibonacci numbers with even indices, i.e. $x_n=F_{2n}$. Moreover, if $x_0$ is replaced by $2$ as initial data of \eqref{rmk}, then we get $x_n=F_{2n+1}$, for $n\geq2$, as solutions to \eqref{ex}.
\end{remark}
\begin{remark}[Convergence Property]
	Using \eqref{cirnuthm}, we can find the limit of the sequence $\{x_{n+\rho}/x_n\}$, where $x_n$ satisfies the relation \eqref{agrelation} and $\rho$ is some positive integer, as $n$ tends to infinity. It is computed as follows:
		\[
			\lim_{n \rightarrow \infty} 	\frac{x_{n+\rho-1}}{x_n}	=  \lim_{n \rightarrow \infty}\frac{\lambda_1^{n+\rho-1}\left[\left(B-a\lambda_2\right)-\left(B-a\lambda_1\right)\left(\frac{\lambda_2}{\lambda_1}\right)^{n+\rho-1}\right]}{\lambda_1^{n-1}\left[\left(B-a\lambda_2\right)-\left(B-a\lambda_1\right)\left(\frac{\lambda_2}{\lambda_1}\right)^{n-1}\right]}
			= \lambda_1^{\rho}.
		\]
\end{remark}

\section{Open problem} The recurrence sequence defined in \eqref{agrelation} can be further generalized in various forms. For instance, we may define the sequence $\{x_n\}$ to satisfy the recurrence relation
	\[
	x_{n+1} = \left\{\begin{array}{cc}
				\sum_{k=0}^n(a+kd)r^kx_{n-k}		&\text{if}\; n\;\text{is even},\\
				\sum_{k=0}^n(b+kc)s^kx_{n-k}		&\text{if}\; n\;\text{is odd},\\
			\end{array}\right.
	\]
	where $a,b,c,d,r$, and $s$ are real numbers with $abrs \neq 0$. This can be further extended into
	\[
	x_{n+1} = \left\{\begin{array}{cc}
				\sum_{k=0}^n(a_1+kd_1)r_1^kx_{n-k}	 &\text{if}\; n\equiv0 \;(\text{mod}\; m),\\
					\vdots&\vdots\\
				\sum_{k=0}^n(a_m+kd_m)r_m^kx_{n-k}	&\text{if}\; n\equiv -1 \;(\text{mod}\; m),
			\end{array}\right.
	\]
	where $a_{i's}, d_{i's}, r_{i's} \in \mathbb{R}$, for all $i= 1, 2, \ldots, m$, with $a_1 a_2 \cdots a_m r_1 r_2 \cdots r_m \neq 0$.

It might be of great interest to study the properties of these sequences (e.g. explicit formula, convergence, etc.).


{\bf Received: February 2, 2015}


\begin{thebibliography}{99}
\bibitem{behera} \textsc{Behera, A., Panda, G. K.}, On the square roots of triangular numbers, \emph{ Fibonacci Quarterly}, {\bf 37} (1997), No. 2, pp. 98--105.

\bibitem{cirnu} \textsc{C\^{i}rnu, M. I.}, Linear recurrence relations with coefficient in progressions, \emph{ Annales Mathematicae et Informaticae,} {\bf 42} (2013), pp. 119--127.

\bibitem{horadam} \textsc{Horadam, A. F.}, Basic properties of certain generalized sequence of numbers, \emph{ Fibonacci Quarterly,} {\bf 3} (1965), pp. 161--176.

\bibitem{majumdar} \textsc{Majumdar, A. A. K.}, Sequences of numbers in generalized arithmetic and geometric progressions, \emph{ Scientia Magna, } \textsc{\bf 4} (2008), No. 2, pp. 101--111. 

\bibitem{rabago1} \textsc{Rabago, J. F. T.}, Arithmetic-geometric alternate sequence, \emph{ Scientia Magna,} {\bf 8} (2012), No. 2, pp. 80--82.

\bibitem{rabago2} \textsc{Rabago, J. F. T.}, Sequence of numbers with three alternate common differences and common ratios, \emph{ Int. J. Appl. Math. Res.,} {\bf 1} (2012), No. 3, pp. 259--267.

\bibitem{rabago3} \textsc{Rabago, J. F. T.}, Periodic sequences in generalized arithmetic and geometric alternate progressions, \emph{ Palestine J. Math.}, {\bf 4} (2015), No. 1, pp. 164--169.

\bibitem{rabago4} \textsc{Rabago, J. F. T.}, On sequence of numbers in generalized arithmetic and geometric progressions, \emph{ Palestine J. Math.}, {\bf 4} (2015), No. 1, pp. 170--176.

\bibitem{zhang} \textsc{Zhang, X., Zhang, Y.}, Sequence of numbers with alternate common differences, \emph{ Scientia Magna,} {\bf 3} (2007), No. 1, pp. 93--97.

\bibitem{ding} \textsc{Zhang, X., Zhang, Y., Ding, J.}, The generalization of sequence of numbers with alternate common differences, \emph{ Scientia Magna,} {\bf 4} (2008), No. 2, pp. 8--11.
\end{thebibliography}
\end{document}